\begin{document}

\newtheorem{theorem}[subsection]{Theorem}
\newtheorem{proposition}[subsection]{Proposition}
\newtheorem{lemma}[subsection]{Lemma}
\newtheorem{corollary}[subsection]{Corollary}
\newtheorem{conjecture}[subsection]{Conjecture}
\newtheorem{prop}[subsection]{Proposition}
\newtheorem{defin}[subsection]{Definition}

\numberwithin{equation}{section}
\newcommand{\mr}{\ensuremath{\mathbb R}}
\newcommand{\mc}{\ensuremath{\mathbb C}}
\newcommand{\dif}{\mathrm{d}}
\newcommand{\intz}{\mathbb{Z}}
\newcommand{\ratq}{\mathbb{Q}}
\newcommand{\natn}{\mathbb{N}}
\newcommand{\comc}{\mathbb{C}}
\newcommand{\rear}{\mathbb{R}}
\newcommand{\prip}{\mathbb{P}}
\newcommand{\uph}{\mathbb{H}}
\newcommand{\fief}{\mathbb{F}}
\newcommand{\majorarc}{\mathfrak{M}}
\newcommand{\minorarc}{\mathfrak{m}}
\newcommand{\sings}{\mathfrak{S}}
\newcommand{\fA}{\ensuremath{\mathfrak A}}
\newcommand{\mn}{\ensuremath{\mathbb N}}
\newcommand{\mq}{\ensuremath{\mathbb Q}}
\newcommand{\half}{\tfrac{1}{2}}
\newcommand{\f}{f\times \chi}
\newcommand{\summ}{\mathop{{\sum}^{\star}}}
\newcommand{\chiq}{\chi \bmod q}
\newcommand{\chidb}{\chi \bmod db}
\newcommand{\chid}{\chi \bmod d}
\newcommand{\sym}{\text{sym}^2}
\newcommand{\hhalf}{\tfrac{1}{2}}
\newcommand{\sumstar}{\sideset{}{^*}\sum}
\newcommand{\sumprime}{\sideset{}{'}\sum}
\newcommand{\sumprimeprime}{\sideset{}{''}\sum}
\newcommand{\sumflat}{\sideset{}{^\flat}\sum}
\newcommand{\shortmod}{\ensuremath{\negthickspace \negthickspace \negthickspace \pmod}}
\newcommand{\V}{V\left(\frac{nm}{q^2}\right)}
\newcommand{\sumi}{\mathop{{\sum}^{\dagger}}}
\newcommand{\mz}{\ensuremath{\mathbb Z}}
\newcommand{\leg}[2]{\left(\frac{#1}{#2}\right)}
\newcommand{\muK}{\mu_{\omega}}
\newcommand{\thalf}{\tfrac12}
\newcommand{\lp}{\left(}
\newcommand{\rp}{\right)}
\newcommand{\Lam}{\Lambda_{[i]}}
\newcommand{\lam}{\lambda}
\newcommand{\af}{\mathfrak{a}}
\newcommand{\sw}{S_{[i]}(X,Y;\Phi,\Psi)}
\newcommand{\lz}{\left(}
\newcommand{\pz}{\right)}
\newcommand{\bfrac}[2]{\lz\frac{#1}{#2}\pz}
\newcommand{\odd}{\mathrm{\ primary}}
\newcommand{\even}{\text{ even}}
\newcommand{\res}{\mathrm{Res}}
\newcommand{\sumn}{\sumstar_{(c,1+i)=1}  w\left( \frac {N(c)}X \right)}
\newcommand{\lab}{\left|}
\newcommand{\rab}{\right|}
\newcommand{\Go}{\Gamma_{o}}
\newcommand{\Ge}{\Gamma_{e}}
\newcommand{\M}{\widehat}
\def\su#1{\sum_{\substack{#1}}}
\newcommand{\sgn}{\text{sgn}}

\theoremstyle{plain}
\newtheorem{conj}{Conjecture}
\newtheorem{remark}[subsection]{Remark}

\newcommand{\pfrac}[2]{\left(\frac{#1}{#2}\right)}
\newcommand{\pmfrac}[2]{\left(\mfrac{#1}{#2}\right)}
\newcommand{\ptfrac}[2]{\left(\tfrac{#1}{#2}\right)}
\newcommand{\pMatrix}[4]{\left(\begin{matrix}#1 & #2 \\ #3 & #4\end{matrix}\right)}
\newcommand{\ppMatrix}[4]{\left(\!\pMatrix{#1}{#2}{#3}{#4}\!\right)}
\renewcommand{\pmatrix}[4]{\left(\begin{smallmatrix}#1 & #2 \\ #3 & #4\end{smallmatrix}\right)}
\def\en{{\mathbf{\,e}}_n}

\newcommand{\ppmod}[1]{\hspace{-0.15cm}\pmod{#1}}
\newcommand{\ccom}[1]{{\color{red}{Chantal: #1}} }
\newcommand{\acom}[1]{{\color{blue}{Alia: #1}} }
\newcommand{\alexcom}[1]{{\color{green}{Alex: #1}} }
\newcommand{\hcom}[1]{{\color{brown}{Hua: #1}} }

\makeatletter
\def\widebreve{\mathpalette\wide@breve}
\def\wide@breve#1#2{\sbox\z@{$#1#2$}%
     \mathop{\vbox{\m@th\ialign{##\crcr
\kern0.08em\brevefill#1{0.8\wd\z@}\crcr\noalign{\nointerlineskip}%
                    $\hss#1#2\hss$\crcr}}}\limits}
\def\brevefill#1#2{$\m@th\sbox\tw@{$#1($}%
  \hss\resizebox{#2}{\wd\tw@}{\rotatebox[origin=c]{90}{\upshape(}}\hss$}
\makeatletter

\title[Mean squares of Quadratic twists of the Fourier coefficients of Modular forms]{Mean squares of Quadratic twists of the Fourier coefficients of Modular forms}

%%\date{\today}
\author[P. Gao]{Peng Gao}
\address{School of Mathematical Sciences, Beihang University, Beijing 100191, China}
\email{penggao@buaa.edu.cn}

\author[Y. Zhao]{Yuetong Zhao}
\address{School of Mathematical Sciences, Beihang University, Beijing 100191, China}
\email{yuetong.zhao.math@gmail.com}

\begin{abstract}
 In this paper, we evaluate asymptotically a smoothed version of the sum
    \[ \displaystyle \sumstar_{d \leq X} \left( \sum_{n \leq Y} \lambda_f(n)\Big(\frac{8d}{n}\Big)\right)^2, \]
    where $\leg {8d}{\cdot}$ is the Kronecker symbol, $\sumstar$ denotes a sum over positive odd square-free integers and $\lambda_f(n)$ are Fourier coefficients of a given modular form $f$.
\end{abstract}

\maketitle

\noindent {\bf Mathematics Subject Classification (2010)}: 11N37, 11L05, 11L40  \newline

\noindent {\bf Keywords}:  mean square, quadratic Dirichlet character, modular $L$-functions

\section{Introduction}\label{sec 1}

   Let $f$ be a fixed holomorphic Hecke eigenform of weight $\kappa\equiv 0 \pmod{4}$ for the full modular group $SL_2(\mz)$. We write the Fourier expansion of  $f$ at infinity as
\begin{align}
\label{fFourier}
f(z)=\sum_{n=1}^{\infty}\lambda_f(n)n^{\frac{\kappa-1}{2}}e(nz),
\end{align}
where $e(z)=e^{2\pi iz}$. It follows from Deligne’s proof \cite{D} of the Weil conjecture that we have
\begin{align}
\label{lambdabound}
 |\lambda_f(n)|\leq d(n)
\end{align}
 where $d(n)$ is the  divisor function of $n$.

Considerable research has been devoted to understanding the behavior of Fourier coefficients of cusp forms. For example, it is shown by J. L. Hafner and Ivi\'c in \cite{HI89} that
\begin{align*}
\sum_{n\leq x}\lambda_f(n)\ll x^{1/3}.
\end{align*}

  In \cite{G&Zhao2024-12}, P. Gao and L. Zhao studied bounds for moments of sums involving with $\lambda_f (n)$ twisted by $\chi_{8d}(n)$ for $d$ running over odd, positive, square-free integers. Here we write $\chi_{d}$ for the Kronecker symbol $\big(\frac{d}{\cdot}\big)$ and we note that $\chi_{8d}$ is a primitive Dirichlet character when $d$ is odd and square-free. More precisely, for any real number $m>0$, let
\begin{align*}
%%\label{mainestimation}
 S_{m}(X,Y;f) =:  \sumstar_{\substack{d \leq X }}\Big | \sum_{n \leq Y}\chi_{8d}(n)\lambda_f(n)\Big |^{m},
\end{align*}
  where $\sumstar$ denotes a sum over positive odd square-free integers. Then it is shown in \cite[Theorem 1.6]{G&Zhao2024-12} that under the generalized Riemann hypothesis (GRH) that for $m \geq 4$, one has
\begin{align}
\label{Sest}
 S_{m}(X,Y;f) \ll XY^{m/2}(\log X)^{\frac {m(m-3)}{2}+2}.
\end{align}
  The above result is achieved using the method developed by B. Szab\'o in \cite{Szab} to apply sharp upper bounds for shifted moments of families of $L$-functions to obtain bounds for moments of the corresponding character sums. Here we also note that the establishment of sharp upper bounds for shifted moments of families of $L$-functions under GRH relies crucially on a method of K. Soundararajan \cite{Sound2009} and its refinement by A. J. Harper \cite{Harper}.

  Similar to \eqref{Sest}, it is shown in \cite[Theorem 1.3]{G&Zhao2024-3} that under GRH, for any real $m>\sqrt{5}+1$,
\begin{align}
\label{momentDCS}
 \sumstar_{\substack{d \leq X }}\Big | \sum_{n \leq Y}\chi_{8d}(n)\Big |^{m} \ll XY^{m/2}(\log X)^{\frac {m(m-1)}{2}+1}.
\end{align}
  It is further proved by M.  Munsch in \cite{Munsch25} that for a smoothed version of the left-hand side expression above, one may replace under GRH the exponent $\frac {m(m-1)}{2}+1$ by $\frac {m(m-1)}{2}$ of the right-hand side expression above for integers $m \geq 2$ and the resulting estimation is optimal.

  The result of Munsch is in accordance to an earlier result of M. V. Armon \cite[Theorem 2]{Armon}, who obtained unconditionallly the optimal bound for the left-hand side expression of \eqref{momentDCS} for the case $m=1$. Note more over that in \cite{Gao19}, the first named author evaluated the expression
\begin{align*}
%%\label{SXY}
   \sumstar_{\substack {d}} \left| \sum_{n} \chi_{8d}(n)\Phi \left(\frac {n}{Y} \right ) \right |^2 \Psi\left(\frac {d}{X} \right ),
\end{align*}
   where $\Phi$ and $\Psi$ are smooth, compactly supported functions on ${\mr}_+=(0,\infty)$, having
 all partial derivatives extending continuously to the boundary, and satisfying the partial
 derivative bounds
\begin{align}
\label{dercond}
\begin{split}
 y^j\Phi^{(j)} \left(\frac {y}{Y} \right ) \ll (1+\frac {y}{Y})^{-A}, \quad  x^j\Psi^{(j)} \left(\frac {x}{X} \right ) \ll (1+\frac {x}{X})^{-B},
\end{split}
\end{align}
  for any positive integers $A, B$.

 An asymptotic formula for the above expression is given in \cite[Theorem 1.1]{Gao19}, which is valid when $X^{1/2} \ll Y \ll X^{1-\varepsilon}$ for any $\varepsilon>0$.

   Motivated by the results in \cite{Gao19} and in \cite{G&Zhao2024-12}. It is the aim of this paper to evaluate asymptotically a smoothed version of $S_{m}(X,Y;f)$. For this, we fix two non-negative, smooth and compacted supported functions $\Phi, \Psi$ as mentioned above and we define
\begin{align} \label{SXYPW}
    S_f(X,Y; \Phi, \Psi)=\sumstar_{d} \Bigg(\sum_{n} \lambda_f(n)\chi_{8d}(n)\Phi \Big(\frac nY \Big)\Bigg)^2 \Psi \Big(\frac d{X} \Big).
\end{align}

   Our main result evaluates $S_f(X,Y; \Phi, \Psi)$ asymptotically as follows.
\begin{theorem}\label{meansquare}
   With the notation as above. We have for large $X$ and $Y$ and any $\varepsilon>0$,
\begin{align}
\label{S}
   S_f(X,Y; \Phi, \Psi)=C_0(\Phi, \Psi)XY+O(X^{1/2+\varepsilon}Y^{3/2+\varepsilon}+XY^{1/2+\varepsilon}),
\end{align}
  where $C_0(\Phi, \Psi)$ is given in \eqref{C0def}, which is a constant depending on $\Phi$ and $\Psi$ only.
\end{theorem}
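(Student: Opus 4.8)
The plan is to open the square in \eqref{SXYPW} and to use the complete multiplicativity of the Kronecker symbol, $\chi_{8d}(m)\chi_{8d}(n)=\chi_{8d}(mn)$, in order to reduce everything to a single inner character sum over $d$. Writing
\[
S_f(X,Y;\Phi,\Psi)=\sum_{m,n}\lambda_f(m)\lambda_f(n)\Phi\Big(\tfrac mY\Big)\Phi\Big(\tfrac nY\Big)\,A(mn),\qquad A(N):=\sumstar_{d}\chi_{8d}(N)\Psi\Big(\tfrac dX\Big),
\]
the whole argument turns on evaluating $A(N)$, and the decisive dichotomy is whether $N=mn$ is a perfect square. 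When $N$ is a square, $\chi_{8d}(N)=\mathbbm 1_{(N,2d)=1}$ is a principal-character value, so $A(N)\asymp X$ and this produces the main term. When $N$ is not a square, $\chi_{8d}(N)$ is a nontrivial character in the variable $d$, so $A(N)$ exhibits cancellation and contributes only to the error.

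For the diagonal $mn=\square$ I would evaluate $A(N)=\sumstar_{(d,N)=1}\Psi(d/X)$ by Mellin inversion, $\Psi(d/X)=\frac{1}{2\pi i}\int\tilde\Psi(s)(X/d)^s\,ds$, picking up the simple pole at $s=1$ of the Dirichlet series over odd squarefree $d$ coprime to $N$; this yields $A(N)=c_N X+O(X^{1/2+\varepsilon})$ with $c_N$ an explicit local density. I would then carry out the $m,n$-summation over $mn=\square$ by the same device applied to $\Phi$: the associated series $\sum_{mn=\square}\lambda_f(m)\lambda_f(n)m^{-s_1}n^{-s_2}$ has a polar line at $s_1+s_2=1$ coming from the Rankin--Selberg convolution $\sum_n\lambda_f(n)^2n^{-s}=\zeta(s)L(s,\sym f)/\zeta(2s)$, whose simple pole at $s=1$ has residue a multiple of $L(1,\sym f)$. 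The residue gives the linear-in-$Y$ contribution, while shifting the contours to the abscissa $\tfrac12+\varepsilon$ leaves an error $O(Y^{1/2+\varepsilon})$. Assembling the two Mellin integrals produces the main term $C_0(\Phi,\Psi)XY$ together with the error $O(XY^{1/2+\varepsilon})$.

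For the off-diagonal part, where $N=mn$ is not a square, I would detect the squarefree condition on $d$ via $\mu^2(d)=\sum_{l^2\mid d}\mu(l)$ and apply the Poisson summation formula for quadratic characters to the resulting sum over $d$ in a fixed residue class, as in \cite{Gao19}. The zero-frequency term vanishes precisely because $\chi_{8d}(N)$ is nonprincipal for non-square $N$, so only the frequencies $k\neq0$ survive; each carries a Gauss sum $\tau_k(N)\ll N^{1/2+\varepsilon}$ and a factor $\widehat\Psi(kX/(2l^2N))$ whose rapid decay confines $k\ll l^2N/X$. In particular the off-diagonal is negligible unless $N=mn\gg X$. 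Using quadratic reciprocity to write the Gauss sum as a product of twists $\leg km\leg kn$ and separating the variables $m,n$ by a Mellin transform, the off-diagonal becomes a sum over $k$ of products of twisted coefficient sums $\sum_m\lambda_f(m)\leg km\Phi(m/Y)\,m^{-w}$.

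Bounding this dual sum is the \emph{main obstacle}: the trivial estimate through $|\lambda_f(n)|\le d(n)$ overshoots the target by a large power of $Y$, so genuine cancellation in the twisted sums $\sum_m\lambda_f(m)\leg km\Phi(m/Y)$ is indispensable. I would extract it from the functional equation of the twisted modular $L$-function $L(s,f\otimes\chi_k)$ (equivalently, from Voronoi summation for $\lambda_f$), which yields square-root cancellation $\ll Y^{1/2+\varepsilon}$ in each factor with a controlled dependence on the conductor $k$. Feeding these bounds back, summing over $k\ll l^2mn/X$ and over $l$, and exploiting the rapid decay of $\widehat\Psi$, the off-diagonal is bounded by $XY\cdot(Y/X)^{1/2}=X^{1/2+\varepsilon}Y^{3/2+\varepsilon}$; this is a genuine saving over the main term exactly in the range $Y\ll X^{1-\varepsilon}$. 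Combining the diagonal main term with the two error contributions $O(XY^{1/2+\varepsilon})$ and $O(X^{1/2+\varepsilon}Y^{3/2+\varepsilon})$ gives \eqref{S}.
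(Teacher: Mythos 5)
Your diagonal analysis is sound and is essentially equivalent to the paper's main-term computation (the paper reaches the same diagonal $mn=\square$ as the $k=0$ frequency of a Poisson summation rather than by evaluating $A(N)$ directly, but the content is the same). The genuine gap is in the off-diagonal, at exactly the step you yourself flag as the main obstacle. You propose to bound each twisted sum $\sum_m\lambda_f(m)\leg{k}{m}\Phi(m/Y)$ \emph{individually} by $Y^{1/2+\varepsilon}$ ``with a controlled dependence on the conductor $k$,'' citing the functional equation of $L(s,f\otimes\chi_k)$ or Voronoi summation. Unconditionally, those tools give only convexity-quality bounds: $\ll Y^{1/2}|k|^{1/2+\varepsilon}$ from the approximate functional equation, or $\ll |k|^{1+\varepsilon}$ by passing to the dual sum of length $k^2/Y$; a bound of the shape $Y^{1/2+\varepsilon}|k|^{\varepsilon}$ is Lindel\"of in the conductor aspect and is not available. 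This is fatal quantitatively: in your setup the surviving frequencies run up to $k\ll l^2Y^2/X$, which for the relevant range of $l$ is of size up to $Y$, and for $k\asymp Y$ neither $Y^{1/2}|k|^{1/2}$ nor $|k|^{1+\varepsilon}$ beats the trivial bound $Y^{1+\varepsilon}$. Carrying your scheme through with convexity gives an off-diagonal contribution of order $l^2Y^4/X$ for each $l$, hence $Z^3Y^4/X$ after summing $l\le Z$, and optimizing against the tail term $XY/Z$ yields an error no better than $X^{1/2}Y^{7/4+\varepsilon}$ --- short of the claimed $X^{1/2+\varepsilon}Y^{3/2+\varepsilon}$. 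What the paper uses instead is not an individual bound but an unconditional mean-value theorem over the family of quadratic twists: after writing $4k=k_1k_2^2$ with $k_1$ a fundamental discriminant, it applies $\sumflat_{|k_1|\le K}|L(1/2+it,f\otimes\chi_{k_1})|^2\ll \big(K(1+|t|)\big)^{1+\varepsilon}$ (Lemma \ref{MLL2}, due to Soundararajan--Young), i.e.\ Lindel\"of \emph{on average} in the discriminant aspect. That average over the frequencies is the key input your proposal lacks.

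A second, related gap: you detect squarefreeness of $d$ by $\mu^2(d)=\sum_{l^2\mid d}\mu(l)$ and then sum over all $l$, but this sum must be truncated. Even granting Lindel\"of-quality bounds per frequency, each $l$ contributes about $Y^{2+\varepsilon}$ to the off-diagonal (the factor $X/l^2$ from Poisson summation cancels against the count $k\ll l^2Y^2/X$ of surviving frequencies, so there is no decay in $l$), and summing over all $l\le\sqrt X$ gives $\gg X^{1/2}Y^{2}$, which already exceeds your target. The paper truncates the M\"obius sum at $l\le Z$ and bounds the tail $l>Z$ by a completely different argument --- no Poisson summation, but rather Mellin inversion and, once again, the second-moment bound of Lemma \ref{MLL2} applied to $L_c(u,f\otimes\chi_{8l})$ --- obtaining $(XY)^{1+\varepsilon}Z^{-1+\varepsilon}$, and only then optimizes $Z=\sqrt{X/Y}$ to arrive at \eqref{S}. (A smaller technical point: your bound $\tau_k(N)\ll N^{1/2+\varepsilon}$ for the Gauss sums is false for general non-squarefree $N=mn$; the paper controls this through the Dirichlet series $Z(\alpha,\beta,\gamma;q,k_1)$ of Lemma \ref{lemma:Z}.) Both principal gaps trace back to the same missing tool: the unconditional second moment of quadratic twists of $L(s,f)$, without which the claimed error term cannot be reached along the lines you describe.
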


   We note that as often is the case, the most interesting case for evaluating $S_f(X,Y; \Phi, \Psi)$ is when $X$ and $Y$ are of compatible size $X \asymp Y$.   Here one checks that \eqref{S} yields a valid asymptotic formula when $Y \ll X^{1-\varepsilon}$ for any $\varepsilon>0$.  Note also in view of \eqref{Sest} and the result of Theorem \ref{meansquare} also suggests that one has for all real $m \geq 2$,
\begin{align*}
%%\label{Sestsharp}
 S_{m}(X,Y;f) \ll XY^{m/2}(\log X)^{\frac {m(m-3)}{2}+1}.
\end{align*}
  Our proof of Theorem \ref{meansquare} makes use of the Poisson summation formula established in \cite{sound1} and follows closely the treatments developed by  K. Soundararajan and M. P. Young \cite{S&Y} in their work on the second moment of quadratic twists of modular $L$-functions.

\section{Preliminaries}

In this section, we gather a few auxiliary results needed in the proof of Theorem \ref{meansquare}.
\subsection{Gauss sums}
For all odd integers $m$ and all integers $k$, we define the Gauss-type sums $G_k(m)$, as did in Section 2.2 in \cite{sound1}, such that
\begin{align}\label{G}
    G_k(m)=
    \left( \frac {1-i}{2}+\left( \frac {-1}{m} \right)\frac {1+i}{2}\right)\sum_{a \mod m}\left( \frac {a}{m} \right) e \left( \frac {ak}{m} \right).
\end{align}

   We reserve the letter $p$ for a prime number throughout the paper. Let also $\varphi(m)$ be the Euler totient function of $m$. Our next result  evaluates $G_k(m)$, which is taken from \cite[Lemma 2.3]{sound1}.
\begin{lemma} \label{Gausssum}
       If $(m_1,m_2)=1$ then $G_k(m_1m_2)=G_k(m_1)G_k(m_2)$. Suppose that $p^a$ is
       the largest power of $p$ dividing $k$ (put $a=\infty$ if $k=0$).
       Then for $b \geq 1$ we have
    \begin{equation*}% \label{011}
        G_k(p^b)= \left\{\begin{array}{cl}
        0  & \mbox{if $b\leq a$ is odd}, \\
        \varphi(p^b) & \mbox{if $b\leq a$ is even},  \\
        -p^a  & \mbox{if $b=a+1$ is even}, \\
        (\frac {k/p^a}{p})p^a\sqrt{p}  & \mbox{if $b=a+1$ is odd}, \\
        0  & \mbox{if $b \geq a+2$}.
        \end{array}\right.
    \end{equation*}
    \end{lemma}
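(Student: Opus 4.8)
The final statement to be proved is Lemma 2.2 (Gauss sum evaluation), not the main theorem.

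\textbf{Plan.}
The strategy is to reduce the claimed closed-form evaluation of $G_k(p^b)$ to a direct computation of the quadratic Gauss sum
\[
  g_k(p^b) := \sum_{a \bmod p^b}\left(\frac{a}{p^b}\right)e\!\left(\frac{ak}{p^b}\right),
\]
since the prefactor $\tfrac{1-i}{2}+\left(\tfrac{-1}{p^b}\right)\tfrac{1+i}{2}$ in \eqref{G} depends only on $p^b \bmod 4$ and can be handled at the end. The multiplicativity statement $G_k(m_1 m_2)=G_k(m_1)G_k(m_2)$ for $(m_1,m_2)=1$ should come first: writing $a \equiv a_2 m_1 + a_1 m_2 \pmod{m_1 m_2}$ via the Chinese Remainder Theorem, I would split $\left(\tfrac{a}{m_1 m_2}\right)=\left(\tfrac{a}{m_1}\right)\left(\tfrac{a}{m_2}\right)$ and $e\!\left(\tfrac{ak}{m_1 m_2}\right)$ accordingly, then verify that the normalizing prefactors also multiply correctly using $\left(\tfrac{-1}{m_1 m_2}\right)=\left(\tfrac{-1}{m_1}\right)\left(\tfrac{-1}{m_2}\right)$ together with the identity relating the four combinations of $\pm$ signs in the $(1-i)/2,(1+i)/2$ factors; this is the routine but slightly fiddly bookkeeping step.

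For the prime-power evaluation I would set $a=p^a$ to be the exact power of $p$ dividing $k$ and split into cases on $b$ versus $a$. When $b \le a$ we have $p^b \mid k$, so $e(ak/p^b)=1$ identically and $g_k(p^b)=\sum_{a \bmod p^b}\left(\tfrac{a}{p^b}\right)$; this sum is $\varphi(p^b)$ when $b$ is even (the symbol is identically the principal character on units) and $0$ when $b$ is odd (the symbol is a nontrivial character). The substantive cases are $b=a+1$ and $b \ge a+2$. Writing $k=p^a k'$ with $p \nmid k'$, I would factor out the $p^a$-periodicity to reduce to a Gauss sum modulo $p^{b-a}$ twisted by $k'$; concretely, grouping the residues $a \bmod p^b$ by their class modulo $p^{b-a}$ produces a factor $\sum e(p^a k' \cdot (\text{carry})/p^b)$ that vanishes unless $b-a \le 1$, which is exactly why $G_k(p^b)=0$ for $b \ge a+2$. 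For $b=a+1$ one is left with the primitive quadratic Gauss sum $\sum_{a \bmod p}\left(\tfrac{a}{p}\right)e(ak'/p)=\left(\tfrac{k'}{p}\right)g(1,p)$, where $g(1,p)$ is the classical Gauss sum; here $k'=k/p^a$, giving the $\left(\tfrac{k/p^a}{p}\right)$ factor, and the parity of $b$ decides whether the leftover $p^a$ or $p^a\sqrt p$ normalization appears.

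\textbf{Main obstacle.}
The delicate point will be the classical evaluation of the quadratic Gauss sum modulo $p$ and its interaction with the complex prefactor in \eqref{G}. One must show that the factor $\tfrac{1-i}{2}+\left(\tfrac{-1}{p^b}\right)\tfrac{1+i}{2}$ precisely cancels the argument $\sqrt{\pm p}$ or $i\sqrt{\pm p}$ coming from $g(1,p)=\varepsilon_p \sqrt p$ (with $\varepsilon_p=1$ or $i$ according to $p \bmod 4$), so that the final answer is the clean real value $\left(\tfrac{k/p^a}{p}\right)p^a\sqrt p$ in the odd case, with no stray sign or phase. Tracking the dependence of $\left(\tfrac{-1}{p^b}\right)$ on the parity of $b$ and reconciling it with $\varepsilon_p$ across all residue classes $p \bmod 4$ is the part most prone to sign errors; everything else reduces to the elementary periodicity and orthogonality arguments sketched above. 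Since the statement is quoted verbatim from \cite[Lemma 2.3]{sound1}, I would ultimately verify the normalization against that reference rather than re-deriving every phase from scratch.
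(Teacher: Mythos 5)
You should know at the outset that the paper contains no proof of this lemma: it is imported verbatim from Lemma 2.3 of \cite{sound1} (the paper says so explicitly), so the only meaningful comparison is with the standard computation in that source, which your sketch essentially reconstructs. Your overall route is the right one: strip off the prefactor $\tfrac{1-i}{2}+\left(\tfrac{-1}{m}\right)\tfrac{1+i}{2}$, which equals $1$ or $-i$ according as $m\equiv 1$ or $3 \pmod 4$; prove multiplicativity via CRT; evaluate the prime-power case by periodicity and orthogonality; and verify at the end that the prefactor makes the answer real. Your handling of $b\leq a$ and of the vanishing for $b\geq a+2$ is correct in outline, and you correctly identify the phase cancellation in the odd case as the delicate point: there $g_p=\sqrt p$ or $i\sqrt p$ according as $p\equiv 1$ or $3\pmod 4$, and the prefactor $1$ or $-i$ multiplies against it to give $\left(\tfrac{k/p^a}{p}\right)p^a\sqrt p$ in both cases.

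Two spots in the sketch need genuine repair rather than polish. First, in the multiplicativity step the CRT substitution $a=a_1m_2+a_2m_1$ produces the cross symbols $\left(\tfrac{m_2}{m_1}\right)\left(\tfrac{m_1}{m_2}\right)$, and cancelling these against the mismatch of the three prefactors is precisely quadratic reciprocity for the Jacobi symbol, $\left(\tfrac{m_1}{m_2}\right)\left(\tfrac{m_2}{m_1}\right)=(-1)^{\frac{m_1-1}{2}\cdot\frac{m_2-1}{2}}$; calling this ``fiddly bookkeeping'' without naming the ingredient leaves the one nontrivial input of that step unstated. Second, your treatment of $b=a+1$ is only correct for $b$ odd. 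When $b=a+1$ is \emph{even}, the symbol $\left(\tfrac{a}{p^b}\right)$ is the principal character, so after factoring out the period one is left not with a quadratic Gauss sum but with the Ramanujan sum $\sum_{u \bmod p,\ (u,p)=1} e(uk'/p)=-1$, yielding $-p^a$: there is a minus sign and no factor $\left(\tfrac{k'}{p}\right)$ at all. Your phrase ``the parity of $b$ decides whether the leftover $p^a$ or $p^a\sqrt p$ normalization appears'' misses both features, and the even and odd cases likewise vanish for $b\geq a+2$ by two different mechanisms ($c_{p^j}(k')=0$ for $j\geq 2$ in the even case; writing $a=u+pv$ and summing the additive character over $v$ in the odd case, since for $b$ odd the symbol has period $p$, not $p^{b-a}$). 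These are errors of conflation, not of strategy; once the two parities are separated, every computation in your plan goes through and yields exactly the table in the lemma, in agreement with the cited source.
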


    \subsection{Poisson Summation}
    For any smooth function $F$, we define
\begin{equation} \label{tildedef}
   \widetilde{F}(\xi)=\int\limits^{\infty}_{-\infty}\left(\cos(2\pi \xi
   x)+\sin(2\pi \xi x) \right)F(x) dx.
\end{equation}

    We note the following Poisson summation formula from \cite[Lemma 2.6]{sound1}.
\begin{lemma}\label{Posum}
   Let $F$ be a smooth function compactly supported on ${\mr}_+$. We have, for any odd integer $n$,
\begin{equation*}
\label{013}
  \sum_{(d,2)=1}\left( \frac {d}{n} \right)
    F\left( \frac {d}{X} \right)=\frac {X}{2n}\left( \frac {2}{n} \right)
    \sum_k(-1)^kG_k(n)\widetilde{F}\left( \frac {kX}{2n} \right),
\end{equation*}
where $\widetilde{F}$ is defined in \eqref{tildedef} and $G_k(n)$ is defined in \eqref{G}.
\end{lemma}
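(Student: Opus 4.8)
\emph{Sketch of the proof.} The plan is to break the sum over odd $d$ into arithmetic progressions modulo $2n$, apply the classical Poisson summation formula to each progression, and then evaluate the resulting twisted Gauss-type sum using the definition \eqref{G}. First I would note that, for fixed odd $n$, the Kronecker symbol $\leg dn$ depends only on $d \bmod n$, hence a fortiori only on $d \bmod 2n$, while the condition $(d,2)=1$ selects the odd residues $b \bmod 2n$. Thus
\begin{align*}
\sum_{(d,2)=1}\leg dn F\lp\frac dX\rp = \sum_{\substack{b \bmod 2n \\ (b,2)=1}}\leg bn \sum_{d \equiv b\,(2n)} F\lp\frac dX\rp.
\end{align*}
Applying Poisson summation to the inner progression $d \equiv b\,(2n)$, with the normalization $\widehat F(\eta)=\int_{-\infty}^{\infty}F(y)e(-\eta y)\,\dif y$, converts it into $\tfrac{X}{2n}\sum_k \widehat F\lp\tfrac{kX}{2n}\rp e\lp\tfrac{bk}{2n}\rp$. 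Interchanging the two summations then leaves me with
\begin{align*}
\sum_{(d,2)=1}\leg dn F\lp\frac dX\rp = \frac{X}{2n}\sum_k \widehat F\lp\frac{kX}{2n}\rp H_k(n), \qquad H_k(n):=\sum_{\substack{b \bmod 2n\\(b,2)=1}}\leg bn e\lp\frac{bk}{2n}\rp.
\end{align*}

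The heart of the argument is the evaluation of $H_k(n)$. Since $(2,n)=1$, the Chinese Remainder Theorem lets me parametrize each odd residue $b\bmod 2n$ as $b \equiv n + (n+1)s \pmod{2n}$ with $s$ running over all residues mod $n$; one checks this is a bijection onto the odd classes. Under it $\leg bn = \leg sn$, and a direct computation gives $e\lp\tfrac{bk}{2n}\rp = (-1)^k e\lp\tfrac{\bar 2\, sk}{n}\rp$, where $\bar 2=\tfrac{n+1}{2}$ is the inverse of $2$ modulo $n$. Substituting $s = 2u$ and using $\leg{2u}n = \leg 2n\leg un$ then yields the clean identity
\begin{align*}
H_k(n) = (-1)^k\leg 2n \sum_{u \bmod n}\leg un e\lp\frac{uk}{n}\rp =: (-1)^k\leg 2n\, g_k(n),
\end{align*}
which isolates the bare quadratic Gauss sum $g_k(n)$ together with the factors $(-1)^k$ and $\leg 2n$ that appear in the statement. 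Recall from \eqref{G} that $G_k(n)=\big(\tfrac{1-i}2+\leg{-1}n\tfrac{1+i}2\big)g_k(n)$, so all that remains is to pass from $g_k(n)\widehat F$ to $G_k(n)\widetilde F$.

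This last passage is where the real transform $\widetilde F$ of \eqref{tildedef} enters, and it is the step I expect to require the most care. Writing $\widehat F(\xi)=C(\xi)-iS(\xi)$ with $C(\xi)=\int F(x)\cos(2\pi\xi x)\,\dif x$ even and $S(\xi)=\int F(x)\sin(2\pi\xi x)\,\dif x$ odd, so that $\widetilde F(\xi)=C(\xi)+S(\xi)$, I would pair the terms indexed by $k$ and $-k$. The substitution $u\mapsto -u$ gives $g_{-k}(n)=\leg{-1}n g_k(n)$, whence also $G_{-k}(n)=\leg{-1}n G_k(n)$. When $\leg{-1}n=1$ both the $\widehat F$-side and the $\widetilde F$-side collapse to $2C(\xi_k)$ times their respective Gauss factor, and the prefactor $\tfrac{1-i}2+\leg{-1}n\tfrac{1+i}2$ equals $1$; when $\leg{-1}n=-1$ both collapse instead to $2S(\xi_k)$ times their factor, and the prefactor equals $-i$, which is exactly what is needed to turn $g_k(n)$ into $G_k(n)$. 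In either case one obtains the paired identity $\sum_k(-1)^k g_k(n)\widehat F\lp\tfrac{kX}{2n}\rp=\sum_k(-1)^k G_k(n)\widetilde F\lp\tfrac{kX}{2n}\rp$, and substituting this back produces the asserted formula. The main obstacle is precisely this bookkeeping: one must track how $\widehat F$, $g_k(n)$ and $G_k(n)$ each behave under $k\mapsto -k$ so as to verify that the contribution of the odd characters $\leg{-1}n=-1$ reassembles into the single transform $\widetilde F$ rather than splitting irreconcilably between its cosine and sine parts.
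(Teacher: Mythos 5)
Your proof is correct and is essentially the same argument as the source's: the paper quotes this lemma from \cite{sound1} without reproducing a proof, and your route --- splitting the odd $d$ into residues modulo $2n$, applying classical Poisson summation in each progression, evaluating the twisted sum $H_k(n)$ via CRT to extract $(-1)^k\leg{2}{n}g_k(n)$, and pairing $k$ with $-k$ to convert $g_k(n)\widehat{F}$ into $G_k(n)\widetilde{F}$ --- is precisely the standard proof given there. The only detail left implicit is the self-paired $k=0$ term, which is covered by your own identity $g_0(n)=\leg{-1}{n}g_0(n)$, forcing $g_0(n)=G_0(n)=0$ whenever $\leg{-1}{n}=-1$.
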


\subsection{Symmetric square $L$-functions}
\label{sec:cusp form}

    Recall that $f$ is a fixed holomorphic Hecke eigenform of weight $\kappa\equiv 0 \pmod{4}$ for the full modular group $SL_2(\mz)$ and that $\lambda_f(n)$ is given in \eqref{fFourier}. The symmetric square $L$-function $L(s, \operatorname{sym}^2 f)$ of $f$ is then defined for $\Re(s)>1$ by
 (see \cite[p. 137]{iwakow} and \cite[(25.73)]{iwakow})
\begin{align*}
%%\label{Lsymexp}
\begin{split}
 L(s, \operatorname{sym}^2 f)=& \zeta(2s) \sum_{n \geq 1}\frac {\lambda_f(n^2)}{n^s}=\prod_{p}\Big( 1-\frac {\lambda_f(p^2)}{p^s}+\frac {\lambda_f(p^2)}{p^{2s}}-\frac {1}{p^{3s}} \Big)^{-1},
\end{split}
\end{align*}
  where $\zeta(s)$ is the Riemann zeta function.

It follows from a result of G. Shimura \cite{Shimura} that $L(s, \operatorname{sym}^2 f)$ is holomorphic at $s=1$. Moreover, the corresponding completed symmetric square $L$-function
\begin{align}
\label{SymsquareLfeqn}
 \Lambda(s, \operatorname{sym}^2 f)=& \pi^{-3s/2}\Gamma \Big(\frac {s+1}{2}\Big)\Gamma \Big(\frac {s+\kappa-1}{2}\Big) \Gamma \Big(\frac {s+\kappa}{2}\Big) L(s, \operatorname{sym}^2 f)
\end{align}
  is entire and satisfies the functional equation $\Lambda(s, \operatorname{sym}^2 f)=\Lambda(1-s, \operatorname{sym}^2 f)$.

\subsection{Analytical behaviors of a Dirichlet Series}
   Let $q$ be an integer and $\alpha, \beta, \gamma \in \mc$. We define for any square-free $k_1$,
\begin{equation}\label{eq:Z}
 Z(\alpha,\beta,\gamma;q,k_1) = \sum_{k_2=1}^{\infty} \sum_{(n_1,2q)=1} \sum_{(n_2,2q)=1} \frac{\lambda_f(n_1)\lambda_f(n_2)}{n_1^{\alpha} n_2^{\beta} k_2^{2\gamma}} \frac{G_{k_1 k_2^2}(n_1 n_2)}{n_1 n_2},
\end{equation}
 where $G_k(m)$ be defined as in \eqref{G}. We write $L_c(s, \chi)$ for the Euler product of $L(s, \chi)$ with the factors from $p | c$ removed.  Our next lemma, taken from \cite[ Lemma 3.3]{S&Y}, describes the analytical behavior of $Z$.
\begin{lemma}\label{lemma:Z}
  With the notation as above. The function $Z(\alpha,\beta,\gamma;q,k_1)$ defined in \eqref{eq:Z} may be written as
 \begin{align*}
 \frac{L_q(1/2+\alpha, f\otimes\chi_{k_1})L_q(1/2+\beta,f\otimes\chi_{k_1})}{\zeta_q(1+\alpha+\beta)L_q(1+2\alpha,\operatorname{sym}^2f)L_q(1+\alpha+\beta,\operatorname{sym}^2f)L_q(1+2\beta,\operatorname{sym}^2f)}Z_{2}(\alpha,\beta,\gamma;q, k_1),
\end{align*}
where $Z_{2}(\alpha,\beta,\gamma;q,k_1)$ is a function uniformly bounded in the region
$\Re(\gamma) \geq 1/2 +\varepsilon$ and $\Re(\alpha)$, $\Re(\beta) \geq \varepsilon$ for any $\varepsilon >0$.
\end{lemma}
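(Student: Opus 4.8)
The plan is to exploit the multiplicative structure of the summand to write $Z$ as an Euler product over primes, to evaluate each local factor explicitly via Lemma \ref{Gausssum}, and then to read off the claimed ratio of $L$-functions by matching local factors. First I would note that, by Lemma \ref{Gausssum}, the Gauss sum $G_{k_1k_2^2}(m)$ is multiplicative in its upper argument $m$, while $\lambda_f$ is multiplicative and $k_1$ is square-free. Writing $i=v_p(n_1)$, $j=v_p(n_2)$ and $c=v_p(k_2)$, this lets me factor
\begin{align*}
 Z(\alpha,\beta,\gamma;q,k_1)=\prod_{p} Z_p, \qquad Z_p=\sum_{c\geq 0}\frac{1}{p^{2c\gamma}}\sum_{i,j\geq 0}\frac{\lambda_f(p^i)\lambda_f(p^j)}{p^{i(1+\alpha)+j(1+\beta)}}\,G_{p^{e_p+2c}}(p^{i+j}),
\end{align*}
where $e_p=v_p(k_1)\in\{0,1\}$. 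For $p\mid 2q$ the conditions $(n_1,2q)=(n_2,2q)=1$ force $i=j=0$, so $Z_p=(1-p^{-2\gamma})^{-1}$; these finitely many factors are bounded for $\Re(\gamma)\geq 1/2+\varepsilon$ and will be absorbed into $Z_2$.

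The heart of the argument is the evaluation of $Z_p$ for $p\nmid 2q$, carried out by substituting the five-case formula of Lemma \ref{Gausssum} for $G_{p^{e_p+2c}}(p^{b})$ with $b=i+j$. When $p\nmid k_1$ one has $a=v_p(k_1k_2^2)=2c$, so an even $b$ contributes $\varphi(p^{b})$ precisely when $c\geq b/2$, while an odd $b$ contributes only through the boundary case $b=2c+1$, where the square-root-saving value $\chi_{k_1}(p)\,p^{b-1/2}$ appears. Summing the geometric series in $c$ then presents $Z_p$ as an explicit rational function of $p^{-\alpha},p^{-\beta},p^{-\gamma}$ and the Hecke eigenvalues; the analogous computation with $a=2c+1$ handles the case $p\mid k_1$, where only even $b$ survive and the value $-p^{a}$ enters at $b=a+1$. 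I expect this bookkeeping to be the main obstacle, since one must correctly track the interaction between the parity of $b$ and the power of $p$ dividing $k_2$.

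With $Z_p$ in hand I would match it against the local factors of the proposed main term. The leading correction to $Z_p$ is the term $\chi_{k_1}(p)\lambda_f(p)(p^{-1/2-\alpha}+p^{-1/2-\beta})$ coming from the odd-$b$ boundary case, and this reproduces exactly the leading expansion of $L_p(1/2+\alpha,f\otimes\chi_{k_1})L_p(1/2+\beta,f\otimes\chi_{k_1})$. Crucially, for $p\nmid k_1$ the series $Z_p$ carries no genuine $p^{-1}$-order term (the even-$b$ contributions appear only with an extra factor $p^{-2c\gamma}$), whereas the product of twisted $L$-factors does, with coefficients $\lambda_f(p^2)$ at $p^{-1-2\alpha},p^{-1-2\beta}$ and $\lambda_f(p)^2$ at $p^{-1-\alpha-\beta}$. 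These discrepancies are precisely the Euler factors of $\zeta_q(1+\alpha+\beta)$ and of the three $\operatorname{sym}^2$ $L$-functions in the denominator of the main term, the matching of the $p^{-1-\alpha-\beta}$ coefficient resting on the Hecke relation $\lambda_f(p)^2=\lambda_f(p^2)+1$. The same analysis applies when $p\mid k_1$: the twisted $L$-factor is then trivial, only even $b$ survive, and the genuine $p^{-1}$-order terms that now appear in $Z_p$ (through the value $-p^{a}$) are furnished directly by the denominators.

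Finally, defining $Z_2$ as the product over all primes of the ratios $Z_p\big/(\text{local main-term factor})_p$, together with the factors from $p\mid 2q$, the preceding step shows each such ratio equals $1+O(p^{-1-\delta})$ for some $\delta=\delta(\varepsilon)>0$: the $\gamma$-dependent corrections are $O(p^{-2\Re(\gamma)})=O(p^{-1-2\varepsilon})$ since $\Re(\gamma)\geq 1/2+\varepsilon$, and the remaining corrections are $O(p^{-1-2\varepsilon})$ since $\Re(\alpha),\Re(\beta)\geq\varepsilon$. Hence the Euler product defining $Z_2$ converges absolutely and is uniformly bounded in the stated region, which completes the proof.
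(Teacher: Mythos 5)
Your argument is sound and is essentially the standard proof of this result: the paper itself does not prove the lemma but quotes it from Soundararajan and Young \cite[Lemma 3.3]{S&Y}, whose proof proceeds exactly as you propose---joint multiplicativity of the summand in $(n_1,n_2,k_2)$ (using, as you implicitly do, that the symbol $\left(\frac{k_1k_2^2/p^a}{p}\right)$ in Lemma \ref{Gausssum} collapses to $\chi_{k_1}(p)$ because $(k_2/p^{c})^2$ is a square), evaluation of the local factors in the three cases $p\mid 2q$, $p\nmid 2qk_1$, $p\mid k_1$, cancellation of the $p^{-1}$-order terms against the $\zeta$ and $\operatorname{sym}^2$ factors through the Hecke relation $\lambda_f(p)^2=\lambda_f(p^2)+1$, and absolute convergence of the Euler product defining $Z_2$. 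The only point worth making explicit is that factor-matching literally establishes the identity only where all the Euler products converge absolutely (e.g.\ $\Re(\alpha),\Re(\beta)>1/2$); since $Z$, $Z_2$ and the displayed $L$-functions are holomorphic on the full stated (connected) region, the identity there follows by analytic continuation.
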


\subsection{Bounds for the second moment of quadratic twists of modular $L$-functions}
  We quote from \cite[Corollary 2.5]{S&Y} the following result concerning upper bounds for the second moment of quadratic twists of modular $L$-functions.
\begin{lemma}\label{MLL2}
 For $\sigma\geq 1/2$ and any $\varepsilon > 0$, we have
\begin{equation*}
 \sumflat_{|d|\leq X}\big|L(\sigma+it, f\otimes\chi_d)\big|^{2} \ll_{\varepsilon} \big(X(1+|t|)\big)^{1+\varepsilon},
\end{equation*}
  where $\sumflat$ denotes a sum over fundamental discriminants.
\end{lemma}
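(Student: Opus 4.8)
The plan is to reduce the second moment to a mean value of quadratic character sums that can be controlled by the large sieve for real characters. First I would observe that since $f$ is a Hecke eigenform for the full modular group its eigenvalues $\lambda_f(n)$ are real, and $\chi_d$ is real, so the Dirichlet coefficients $\lambda_f(n)\chi_d(n)$ of $L(s,f\otimes\chi_d)$ are real; hence $\overline{L(\sigma+it,f\otimes\chi_d)}=L(\sigma-it,f\otimes\chi_d)$ and $|L(\sigma+it,f\otimes\chi_d)|^2=L(\sigma+it,f\otimes\chi_d)L(\sigma-it,f\otimes\chi_d)$. I would then split the range $|d|\le X$ into $O(\log X)$ dyadic blocks $|d|\asymp D$, so that on each block the arithmetic conductor $d^2$, and hence the analytic conductor $\asymp D^2(1+|t|)^2$ of the degree two $L$-function $f\otimes\chi_d$, is of essentially constant size.

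On each dyadic block I would apply the approximate functional equation to each factor $L(\sigma\pm it,f\otimes\chi_d)$. Using the functional equation $\Lambda(s,f\otimes\chi_d)=\varepsilon_d\Lambda(1-s,f\otimes\chi_d)$, this expresses the $L$-value as a smoothly truncated Dirichlet polynomial $\sum_n \lambda_f(n)\chi_d(n)\, n^{-\sigma\mp it}V(n/N)$ of length $N\ll D(1+|t|)^{1+\varepsilon}$ plus a dual sum of the same length; since $|\varepsilon_d|=1$ the root number is irrelevant for an upper bound. Multiplying the two approximate functional equations and using $|ab|\le \tfrac12(|a|^2+|b|^2)$, it then suffices to bound sums of the shape $\sumflat_{|d|\asymp D}\big|\sum_{n\le N}a_n\chi_d(n)\big|^2$, where $a_n=\lambda_f(n)\,n^{-\sigma-it}V(n/N)$.

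To the latter I would apply Heath-Brown's quadratic large sieve for real characters, which gives $\sumflat_{|d|\asymp D}\big|\sum_{n\le N}a_n\chi_d(n)\big|^2\ll (DN)^{\varepsilon}(D+N)\sum_{n\le N}|a_n|^2$ (after the routine bookkeeping of writing $n=n_0m^2$ with $n_0$ squarefree and restricting to fundamental discriminants). By the Rankin–Selberg estimate $\sum_{n\le N}|\lambda_f(n)|^2\asymp N$ together with \eqref{lambdabound}, one has $\sum_{n\le N}|a_n|^2\ll\sum_{n\le N}|\lambda_f(n)|^2 n^{-2\sigma}\ll \log N\ll (X(1+|t|))^{\varepsilon}$ for all $\sigma\ge 1/2$. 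Since $D\le X$ and $N\ll D(1+|t|)^{1+\varepsilon}$, the factor $D+N\ll X(1+|t|)^{1+\varepsilon}$, so each dyadic block contributes $\ll (X(1+|t|))^{1+\varepsilon}$; summing over the $O(\log X)$ blocks and absorbing the logarithm into $X^{\varepsilon}$ yields the claimed bound.

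The main technical obstacle is carrying out the approximate functional equation uniformly over a dyadic block with the correct joint dependence on $D$ and $t$: one must track the archimedean part of the analytic conductor so that the truncation length is genuinely $\ll D(1+|t|)$ and not a larger power of $(1+|t|)$, since the target exponent $(1+|t|)^{1+\varepsilon}$ is essentially sharp. A secondary point is the correct normalization of the large sieve for fundamental discriminants rather than all squarefree moduli, which is harmless. Alternatively, and more in keeping with the rest of this paper, one could open the square, apply Poisson summation in $d$ via Lemma \ref{Posum}, and bound the resulting main and off-diagonal terms through the analytic continuation of the Dirichlet series $Z$ furnished by Lemma \ref{lemma:Z}; in that route the main difficulty moves to estimating the off-diagonal contributions with the correct $t$-uniformity via contour shifts.
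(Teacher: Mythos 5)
The paper does not prove this lemma at all: it is quoted verbatim from \cite[Corollary 2.5]{S&Y}, and your argument---approximate functional equation on dyadic blocks $|d|\asymp D$ with truncation length $\ll D(1+|t|)^{1+\varepsilon}$, followed by Heath-Brown's quadratic large sieve and the Rankin--Selberg/Deligne bound for the coefficient sums---is precisely how Soundararajan and Young prove it there. So the proposal is correct and takes essentially the same route as the source on which the paper relies; the only point you gloss over (harmlessly, since it only helps) is that for $\sigma>1/2$ the dual sum in the approximate functional equation carries the factor $|X(\sigma+it)|\asymp\big(D(1+|t|)\big)^{1-2\sigma}\leq 1$ rather than a unimodular root number.
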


\section{Proof of Theorem 1.1}
Expanding the square in (\ref{SXYPW})  allows us to recast $S(X,Y;\Phi,\Psi)$ as
\begin{align*}
    S(h)
    :=\sumstar_{\substack {(d,2)=1}}\sum_{(n_1,2)=1}\sum_{(n_2,2)=1} \lambda_f(n_1)\lambda_f(n_2)\chi_{8d}(n_1n_2)h(d,n_1,n_2),
\end{align*}
where
\begin{align*}
%%\label{funch}
    h(x,y,z)=\Psi\Big(\frac{x}{X}\Big)\Phi \Big(\frac {y}Y \Big)\Phi \Big(\frac {z}Y \Big).
\end{align*}

  We denote $\mu$ for M\"obius function and apply the M\"obius inversion to remove the square-free condition on $d$ to obtain that, for a parameter $Z$ to be specified later,
\begin{align}\label{S(h)}
    S(h)
    &=\Big(\sum_{\substack{a\leq Z\\(a,2)=1}}+\sum_{\substack{a> Z\\(a,2)=1}}\Big)\mu(a)\sum_{\substack {(d,2)=1}}\sum_{(n_1,a)=1}\sum_{(n_2,a)=1} \lambda_f(n_1)\lambda_f(n_2)\chi_{8d}(n_1n_2)h(da^2,n_1,n_2),\nonumber\\
    &=: S_1(h)+S_2(h).
\end{align}

   Our next result estimates $S_2(h)$.
\begin{lemma}\label{S2h}
    We have
    \begin{align*}
    S_2(h)\ll (XY)^{1 + \varepsilon} Z^{-1+ \varepsilon}.
    \end{align*}
\end{lemma}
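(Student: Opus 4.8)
The plan is to exploit the fact that, because the inner double sum over $n_1,n_2$ factors, $S_2(h)$ is, up to the sign $\mu(a)$, a sum of \emph{squares}. Writing $\chi_{8d}(n_1n_2)=\chi_{8d}(n_1)\chi_{8d}(n_2)$ and recalling $h(da^2,n_1,n_2)=\Psi(da^2/X)\Phi(n_1/Y)\Phi(n_2/Y)$, I would record
\begin{align*}
 S_2(h)=\sum_{\substack{a>Z\\(a,2)=1}}\mu(a)\sum_{(d,2)=1}\Psi\Big(\frac{da^2}{X}\Big)M(d,a)^2,\qquad M(d,a):=\sum_{(n,a)=1}\lambda_f(n)\chi_{8d}(n)\Phi\Big(\frac nY\Big),
\end{align*}
where the condition $(n,2)=1$ is automatic since $\chi_{8d}$ is supported on odd $n$. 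Since $\Psi\geq 0$ and $M(d,a)$ is real, I would bound $|\mu(a)|\le 1$ and pass to absolute values to obtain
\begin{align*}
 |S_2(h)|\le \sum_{\substack{Z<a\ll X^{1/2}\\(a,2)=1}}\ \sum_{(d,2)=1}\Psi\Big(\frac{da^2}{X}\Big)M(d,a)^2,
\end{align*}
the cutoff $a\ll X^{1/2}$ coming from the compact support of $\Psi$ together with $d\ge 1$. Thus everything reduces to a mean value over $d\ll X/a^2$ of the square of a smoothed quadratic twist character sum, for each fixed $a$.

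Next I would recognize $M(d,a)$ as a smoothed piece of the twisted $L$-function $L(s,f\otimes\chi_{8d})$. By Mellin inversion,
\begin{align*}
 M(d,a)=\frac{1}{2\pi i}\int_{(c)}\check{\Phi}(s)\,Y^{s}\,L_a(s,f\otimes\chi_{8d})\,ds\qquad(c>1),
\end{align*}
where $\check{\Phi}$ is the Mellin transform of $\Phi$ and $L_a$ denotes the $L$-function with Euler factors at $p\mid a$ removed, in the notation introduced before Lemma \ref{lemma:Z}. Since $f\otimes\chi_{8d}$ is cuspidal, $L(s,f\otimes\chi_{8d})$ is entire, so I can shift the contour to the critical line $\Re(s)=1/2$ without crossing poles; the rapid decay of $\check{\Phi}$ annihilates the horizontal segments. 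This represents $M(d,a)$ as $Y^{1/2}$ times an absolutely convergent $t$-integral of $\check{\Phi}(1/2+it)L_a(1/2+it,f\otimes\chi_{8d})$.

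I would then square, sum over $d$, open the square as a double integral in $t_1,t_2$, and apply Cauchy--Schwarz to the $d$-sum, reducing matters to bounding $\sum_{d}\big|L_a(1/2+it,f\otimes\chi_{8d})\big|^2$ over odd $d\ll X/a^2$. To feed this into Lemma \ref{MLL2}, which is phrased for fundamental discriminants, I would write each odd $d=d_1e^2$ with $d_1$ squarefree, observe that $8d_1$ is a fundamental discriminant and that $\chi_{8d}$ coincides with $\chi_{8d_1}$ off the primes dividing $e$, so that on the critical line $\big|L_a(1/2+it,f\otimes\chi_{8d})\big|\ll (ae)^{\varepsilon}\big|L(1/2+it,f\otimes\chi_{8d_1})\big|$; summing the convergent series in $e$ and applying Lemma \ref{MLL2} then yields
\begin{align*}
 \sum_{\substack{d\ll X/a^2\\(d,2)=1}}\big|L_a(\tfrac12+it,f\otimes\chi_{8d})\big|^2\ll \Big(\frac{X}{a^2}\Big)^{1+\varepsilon}(1+|t|)^{1+\varepsilon}.
\end{align*}
The factors $(1+|t_j|)^{(1+\varepsilon)/2}$ are absorbed by the rapidly decaying $\check{\Phi}(1/2+it_j)$, so the double $t$-integral converges and I get $\sum_d\Psi(da^2/X)M(d,a)^2\ll (XY)^{1+\varepsilon}a^{-2}$. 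Summing the convergent tail $\sum_{a>Z}a^{-2}\ll Z^{-1}$ produces $|S_2(h)|\ll (XY)^{1+\varepsilon}Z^{-1}$, which is the claimed bound.

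The main obstacle is the reduction to Lemma \ref{MLL2}: the $d$-sum here runs over all odd integers rather than fundamental discriminants, so I must carefully control both the non-squarefree part (the auxiliary variable $e$) and the Euler factors removed at primes dividing $a$, checking that each costs only a harmless $(ae)^{\varepsilon}$ and that the resulting $e$-summation converges. A secondary point is the uniformity of the second-moment bound in the shift $t$, which is exactly the $(1+|t|)^{1+\varepsilon}$ dependence furnished by Lemma \ref{MLL2} and is what guarantees convergence of the $t$-integrals after Cauchy--Schwarz.
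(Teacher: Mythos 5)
Your proposal is correct and follows essentially the same route as the paper: Mellin inversion to express the twisted sums as $L$-values on the critical line (the contour shift being legitimate since the relevant Dirichlet series are entire), an AM--GM/Cauchy--Schwarz decoupling, the second moment bound of Lemma \ref{MLL2}, and the convergent tail $\sum_{a>Z}a^{-2+\varepsilon}\ll Z^{-1+\varepsilon}$. The only cosmetic difference is where the square part of $d$ is extracted: the paper writes $d=b^2l$ with $l$ square-free at the outset and groups $c=ab$, so that only genuine fundamental discriminants $8l$ ever appear in the $L$-functions, whereas you perform the decomposition $d=d_1e^2$ after the contour shift, at the price of the harmless factor $(ae)^{\varepsilon}$ from the removed Euler factors.
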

\begin{proof}
    We  write $d=b^2l$ with $l$ square-free, and group terms according to $c=ab$. Thus
    \begin{equation*}
         S_2(h) = \sum_{(c,2)=1} \sum_{\substack{a > Z \\ a|c}} \mu(a)
         \sumstar_{(l,2)=1}\sum_{(n_1,c)=1} \sum_{(n_2,c)=1}
         \chi_{8l}(n_1n_2) \lambda_f(n_1)\lambda_f(n_2) h(c^2l,n_1,n_2).
    \end{equation*}
 By \eqref{dercond} and  applying Mellin transforms in the variables $n_1$ and $n_2$, we see that
        \begin{align}\label{S2(h)}
        S_2(h)&=\frac{1}{(2\pi i)^2}  \sum_{(c,2)=1} \sum_{\substack{a > Z \\ a|c}} \mu(a)
        \sumstar_{(l,2)=1} \int_{(1+\varepsilon)} \int_{(1+\varepsilon)} \sum_{\substack{n_1, n_2 \\ (n_1n_2, c)=1}}\frac{\chi_{8l}(n_1)\chi_{8l}(n_2)\lambda_f(n_1)\lambda_f(n_2)}{n_1^{u}n_2^{v}}{\check h}(c^2l;u,v)du\,dv,
        \end{align}
        where
 \begin{equation}\label{checkh}
        {\check h}(x;u,v) = \int_0^{\infty} \int_0^{\infty} h(x,y,z) y^u z^v \frac{d y}{y} \frac{d z}{z}.
    \end{equation}
     Integrating by parts several times and applying \eqref{dercond}, we see that for $\Re(u)$, $\Re(v) >0$ and any positive integers $A_j$, $1 \leq j \leq 3$,
 \begin{equation}\label{uppercheckh}
    {\check h}(x;u,v) \ll \left( 1 + \frac{x}{X} \right)^{-A_1} \frac{Y^{\Re(u)+\Re(v)}}{|uv|(1+|u|)^{A_2} (1 + |v|)^{A_3}}.
\end{equation}
Note that the sum over $n_1$ and $n_2$ in \eqref{S2(h)} equals $L_c(u,f\otimes\chi_{8l}) L_c(v, f\otimes\chi_{8l})$ and we can thus
move the lines of integration in \eqref{S2(h)}
to $\Re(u)=\Re(v)=1/2+\varepsilon$ without encountering any poles.  Moreover,
\begin{align} \label{Lbound}
        |L_c(u,f\otimes\chi_{8l}) L_c(v, f\otimes\chi_{8l})|
        \leq d(c)^2 ( |L_c(u,f\otimes\chi_{8l})|^2 + |L_c(v, f\otimes\chi_{8l})|^2).
\end{align}
We now apply \eqref{uppercheckh} and \eqref{Lbound} with $A_2=A_3=5$ and $A_1$ sufficiently large to deduce from \eqref{S2(h)} that
\begin{align*}
\begin{split}
S_2(h)
 \ll & \sum_{(c,2)=1} \sum_{\substack{a > Z \\ a|c}} d(c)^2 Y^{1+2\varepsilon} \int\limits_{-\infty}^{\infty} (1+|t|)^{-10} \sumstar_{(l,2)=1} \left(1 + \frac{c^2 \ell}{X}\right)^{-A_1}
 \Big|L( 1/2+\varepsilon +it,  f\otimes\chi_{8\ell})\Big|^{2}  d t \\
\ll & (XY)^{1+\varepsilon} \sum_{(c,2)=1} \sum_{\substack{a > Z \\ a|c}}d(c)^2  /c^2\ll (XY)^{1 + \varepsilon} Z^{-1+ \varepsilon},
\end{split}
\end{align*}
 where the last estimation above follows from Lemma \ref{MLL2}. This completes the proof of the lemma.
\end{proof}

  It now remains to evaluate $S_1(h)$. Interchanging the order of summation and  in the expression of $S_1(h)$ and applying the Poisson summation formula, Lemma \ref{Posum}, to the sum over $d$, we obtain that
\begin{align}
\label{S1(h)}
\begin{split}
    S_1(h)
    =&\sum_{\substack{a\leq Z\\(a,2)=1}}\mu(a)\sum_{(n_1,a)=1}\sum_{(n_2,a)=1}\Bigg(\sum_{\substack {(d,2)=1}} \chi_{8d}(n_1n_2)\Psi\Big(\frac{d}{X}\Big)\Bigg)\lambda_f(n_1)\lambda_f(n_2)\Phi\Big(\frac{n_1}{Y}\Big)\Phi\Big(\frac{n_2}{Y}\Big)\\
    =&\sum_{\substack{a\leq Z\\(a,2)=1}}\frac{\mu(a)}{a^2}\sum_{\substack{(n_1,2a)=1\\(n_2,2a)=1}}\Bigg(\frac{X}{2n_1n_2}\sum_{k\in\mz}(-1)^{k}G_k(n_1n_2)\widetilde{\Psi}
\Big(\frac{kX}{2a^2n_1n_2}\Big)\Bigg)
    \lambda_f(n_1)\lambda_f(n_2)\Phi \Big(\frac {n_1}Y \Big)\Phi \Big(\frac {n_2}Y \Big) \\
=& \frac{X}{2}\sum_{\substack{a\leq Z\\(a,2)=1}}\frac{\mu(a)}{a^2}\sum_{k\in\mz}(-1)^{k}\sum_{\substack{(n_1,2a)=1\\(n_2,2a)=1}}\frac{\lambda_f(n_1)\lambda_f(n_2)}{n_1n_2}G_k(n_1n_2)\int_{0}^{\infty}(C+S)\Big(\frac{\pi kxX}{a^2n_1n_2}\Big)h(xX,n_1,n_2)\,dx,
\end{split}
\end{align}
 where the last expression above follows from \eqref{tildedef} upon writing $C=\cos$ and $S=\sin$.

 The main contribution to $S_1(h)$ arises from the term $k=0$ term, which we denote by $S_{k=0}$. The remaining contribution from
$k\neq 0$ is written as $S_{k\neq 0}$.

\subsection{The main term}

    We denote the symbol $\square$ for a perfect square and we apply Lemma \ref{Gausssum} to see that
\begin{equation*}% \label{011}
    G_0(n)= \left\{\begin{array}{cl}
    \varphi(n)  & \mbox{if $n=\square$ }, \\
    0 & \mbox{if $n\neq\square$ }.
    \end{array}\right.
\end{equation*}
  We then set $k=0$ in \eqref{S1(h)} to see that
\begin{equation*}
    S_{k=0}=\frac{X}{2}\sum_{\substack{(n_1n_2,2)=1\\n_1n_2=\square}}\Bigg(\sum_{\substack{a\leq Z\\(a,2n_1n_2)=1}}\frac{\mu(a)}{a^2}\Bigg)\frac{\varphi(n_1n_2)\lambda_f(n_1)\lambda_f(n_2)}{n_1n_2}H_0(n_1,n_2),
\end{equation*}
where
\begin{equation*}
    H_0(y,z)=\int_{0}^{\infty} h(xX,y,z)\,dx.
\end{equation*}
Note that
\begin{align*}
\sum_{\substack{a \leq Z \\ (a,2n_1n_2)=1}} \frac{\mu(a)}{a^2}
=& \frac{1}{\zeta(2)}
\prod_{p|2n_1n_2} \left( 1-\frac{1}{p^2}\right)^{-1} +O(Z^{-1}) = \frac{8}{\pi^2}\prod_{p|n_1n_2} \left(1-\frac{1}{p^2}\right)^{-1}+ O(Z^{-1}) .
\end{align*}
  Note moreover that
\begin{align*}
    \frac{\varphi(n_1n_2)}{n_1n_2}=\prod_{p|n_1n_2}\frac{p-1}{p},
\end{align*}
  where we denote any empty product to be $1$. It then follows that
\begin{align}\label{Sk=01}
    S_{k=0}&=\frac{4X}{\pi^2}\sum_{\substack{(n_1n_2,2)=1\\n_1n_2=\square}}\lambda_f(n_1)\lambda_f(n_2)\prod_{p|n_1n_2} \left(\frac{p}{p+1}\right)H_0(n_1,n_2)+O\Bigg(\frac{X}{Z}\sum_{\substack{(n_1n_2,2)=1\\n_1n_2=\square}}\Big|\lambda_f(n_1)\lambda_f(n_2)H_0(n_1,n_2)\Big|\Bigg).
\end{align}
  Observe from the definition of $h$ and \eqref{dercond} that $H_0\ll 1$ and $H_0=0$ unless both $n_1$ and $n_2$ are $\ll Y$. Also note that it follows from \cite[Theorem 2.11]{MVa1} that for any $\varepsilon>0$, we have
\begin{align*}
%%\label{dbound}
 d(n) \ll n^{\varepsilon}.
\end{align*}

  We deduce from the above and \eqref{lambdabound} that the $O$-term in \eqref{Sk=01} is
\begin{align}\label{Oterm}
\ll XZ^{-1}\sum_{\substack{(n_1n_2,2)=1\\n_1,n_2\leq Y\\n_1n_2=\square}}|\lambda_f(n_1)\lambda_f(n_2)|\ll XZ^{-1}\sum_{\substack{(n_1n_2,2)=1\\n_1,n_2\leq Y\\n_1n_2=\square}}d(n_1)d(n_2) \ll XY^{\varepsilon}Z^{-1}\sum_{\substack{n_1,n_2\leq Y\\n_1n_2=\square}}1,
\end{align}

   Note that upon writing $n_1=r(n_1/r), n_2=r(n_2/r)$ with $r=(n_1, n_2)$, the condition $n_1n_2=\square$ then implies that we must have $n_1/r=s^2_1, n_2/r=s^2_2$. It follows that
\begin{align}
\label{sumsquareterm}
 \sum_{\substack{n_1,n_2\leq Y\\n_1n_2=\square}}1 \ll \sum_{\substack{s_1,s_2\leq \sqrt{Y}}} \sum_{r \leq \min (Y/s^2_1, Y/s^2_2)}1 \ll
\sum_{\substack{s_1,s_2\leq \sqrt{Y}}}\min (\frac Y{s^2_1}, \frac Y{s^2_2}) \ll \sum_{s_1 \leq Y}\Big (\sum_{s_2 \leq s_1}\frac Y{s^2_1}+\sum_{s_2 > s_1}\frac Y{s^2_2} \Big ) \ll Y^{1+\varepsilon}.
\end{align}

   We deduce from \eqref{Oterm} and \eqref{sumsquareterm} that the the $O$-term in \eqref{Sk=01} is
\begin{align}
\label{Otermbound}
\ll XY^{1+\varepsilon}Z^{-1}.
\end{align}

  We next apply the Mellin inversion to recast $H_0(n_1,n_2)$ as
\begin{equation}\label{Mellin1}
    H_0(n_1,n_2)=\Big(\frac{1}{2\pi i}\Big)^2\int_{(2)}\int_{(2)}\frac{Y^uY^v}{n_1^{u}n_2^v}\widetilde{H}_0(u,v)\,du\,dv,
\end{equation}
where
\begin{equation*}
    \widetilde{H}_0(u,v)=\int_{0}^{\infty}\check{h}(xX; uY,vY)d x
\end{equation*}
and $\check{h}(x;u,v)$ is defined by \eqref{checkh}. Similar to \eqref{uppercheckh}, we see that

\begin{equation}\label{estimate1}
    \widetilde{H}_0(u,v)\ll \frac{1}{|uv|(1+|u|)^{A_2} (1 + |v|)^{A_3}}.
\end{equation}

   We deduce from \eqref{Sk=01}, \eqref{Otermbound} and \eqref{Mellin1} that
\begin{align}
\label{Sk=02}
    S_{k=0}
    =\Big(\frac{1}{2\pi i}\Big)^2\frac{4X}{\pi^2}\int_{(2)}\int_{(2)}Y^uY^v\widetilde{H}_0(u,v)Z(u,v)\,du\, dv+O\Big(\frac{XY^{1+\varepsilon}}{Z}\Big),
\end{align}
where
\begin{align*}
    Z(u,v)=\sum_{\substack{(n_1n_2,2)=1\\n_1n_2=\square}}\frac{\lambda_f(n_1)\lambda_f(n_2)}{n_1^{u}n_2^{v}}\prod_{p|n_1n_2}\frac{p}{p+1}.
\end{align*}

  The above function is essentially defined on \cite[p. 1108]{S&Y}, except with $u, v$ replaced by $u+1/2, v+1/2$ there.  We thus deduce from \cite[(4.6)]{S&Y} that
\begin{align*}
%%\label{equal}
        Z(u,v)=\zeta(u+v)L(2u,\operatorname{sym}^2f)L(2v,\operatorname{sym}^2f)L(u+v,\operatorname{sym}^2f)Z_2(u,v),
    \end{align*}
    where $Z_2(u,v)$ converges absolutely in the region of $\Re(u)$ and $\Re(v)$ larger than $1/4$, and is uniformly bounded there.

    We now evaluate the main term of \eqref{Sk=02} by first moving the integrals there to $\Re(u)=\Re(v)=1/2+\varepsilon$ without encountering any pole. We then move the line of integration in $v$ to $\Re(v)=1/4+\varepsilon$ by encountering a simple pole at $v=1-u$, whose residue equals
\begin{align*}
\frac{1}{2\pi i}\frac{4XY}{\pi^2}\int_{(1/2+\varepsilon)}\widetilde{H}_0(u,1-u)L(2u,\operatorname{sym}^2f)L(2-2u,\operatorname{sym}^2f)L(1,\operatorname{sym}^2f)Z_2(u,1-u)\,d u.
\end{align*}

    We now estimate the remaining integrals on $\Re(u)=1/2+\varepsilon$, $\Re(v)=1/4+\varepsilon$ by further moving the line of integration in $u$ to $\Re(u)=1/4+\varepsilon$ without encountering any pole. We then apply the convexity bound for $\zeta(s), L(s,\operatorname{sym}^2f)$ as given in \cite[(5.20)]{iwakow} together with \eqref{SymsquareLfeqn} (which implies that the analytic conductor of $L(s,\operatorname{sym}^2f)$ is $\ll (1+|s|)^3$) to see that for any $\varepsilon>0$,
\begin{align*}
%%\label{zetabound}
   \zeta(s) \ll
   (1+|s|)^{(1-\Re(s))/2+\varepsilon}, \  L(s,\operatorname{sym}^2f) \ll
   (1+|s|)^{3(1-\Re(s))/2+\varepsilon}\qquad & 0< \Re(s) <1
\end{align*}
The above estimations and \eqref{estimate1} with $A_2=A_3=5$ now enable us to see that the integrations on $\Re(u)=1/4+\varepsilon$, $\Re(v)=1/4+\varepsilon$ contribute $\ll XY^{1/2+\varepsilon}$. We thus conclude that
\begin{align}
\label{Sk=03}
\begin{split}
    S_{k=0}
    =&C_0(\Phi, \Psi)XY+O(XY^{1/2+\varepsilon}),
\end{split}
\end{align}
  where
\begin{align}
\label{C0def}
\begin{split}
    C_0(\Phi, \Psi) =: &\frac{1}{2\pi i} \cdot \frac{4}{\pi^2}\int_{(1/2+\varepsilon)}\widetilde{H}_0(u,1-u)L(2u,\operatorname{sym}^2f)L(2-2u,\operatorname{sym}^2f)L(1,\operatorname{sym}^2f)Z_2(u,1-u)\,d u.
\end{split}
\end{align}

\subsection{The $k\neq 0$ terms}

We first state a lemma, which is established in \cite[Sec. 3.3]{S&Y}, to express the weight function appearing in \eqref{S1(h)} in a form more suitable for Mellin transforms.

\begin{lemma}\label{CS}
Let $f$ be a smooth function on $\mathbb{R}_+$ with rapid decay at infinity such that $f$ itself and all its derivatives have finite limits as $x\rightarrow0^+$. We consider the transform given by
\begin{equation*}
    \widehat{f}_{CS}(y) := \int_0^{\infty} f(x) CS(2\pi xy)\, dx,
\end{equation*}
where $CS$ stands for either the $\cos$ or the $\sin$ function.  Then
\begin{equation*}
    \widehat{f}_{CS}(y)  = \frac{1}{2\pi i} \int_{(1/2)} f_M(1-s) \Gamma(s) CS\left(\frac{\sgn(y) \pi s}{2}\right) (2\pi |y|)^{-s} \, ds,
\end{equation*}
  where $f_M(s)$ is the Mellin transform of $f$ defined by
\begin{equation*}
    f_M(s)=\int_{0}^{\infty}x^{s-1}f(x)\,dx.
\end{equation*}
\end{lemma}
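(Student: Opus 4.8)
The plan is to recognise the stated identity as an instance of the Mellin--Parseval (multiplicative convolution) formula, combined with the classical Mellin transforms of the cosine and sine. Recall that for $0<\Re(s)<1$ one has the standard evaluations
\[
\int_0^\infty x^{s-1}\cos(x)\,dx=\Gamma(s)\cos\!\Big(\frac{\pi s}{2}\Big),\qquad
\int_0^\infty x^{s-1}\sin(x)\,dx=\Gamma(s)\sin\!\Big(\frac{\pi s}{2}\Big),
\]
the sine transform in fact being valid for $-1<\Re(s)<1$. Both hold on the line $\Re(s)=1/2$, which is where the final contour will sit.

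First I would compute the Mellin transform of the weight $x\mapsto CS(2\pi xy)$. Writing $k(x)=CS(2\pi xy)$ and substituting $t=2\pi|y|x$, the two evaluations above give
\[
k_M(s)=\int_0^\infty x^{s-1}\,CS(2\pi xy)\,dx=(2\pi|y|)^{-s}\,\Gamma(s)\,CS\!\Big(\frac{\sgn(y)\,\pi s}{2}\Big),
\]
where the factor $\sgn(y)$ records the parity of the two trigonometric functions: since $\cos$ is even one has $\cos(\sgn(y)\pi s/2)=\cos(\pi s/2)$ for either sign of $y$, while the odd symmetry of $\sin$ produces precisely the sign $\sgn(y)$ when $y<0$. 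Next I would invoke the Mellin--Parseval formula, which in the form I use reads $\int_0^\infty f(x)k(x)\,dx=\frac{1}{2\pi i}\int_{(1/2)}f_M(1-s)\,k_M(s)\,ds$. Substituting the expression for $k_M(s)$ just obtained yields exactly the claimed formula for $\widehat{f}_{CS}(y)$. The hypotheses on $f$ --- rapid decay at infinity together with finite limits of $f$ and of all its derivatives at the origin --- guarantee that $f_M(1-s)=\int_0^\infty x^{-s}f(x)\,dx$ is holomorphic in a neighbourhood of $\Re(s)=1/2$ and, after repeated integration by parts, decays faster than any fixed power of $|\Im(s)|$ on vertical lines.

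The main obstacle is the rigorous justification of the exchange of the $x$- and $s$-integrals, since $k(x)=CS(2\pi xy)$ does not decay at infinity and its Mellin transform exists only conditionally (equivalently, by analytic continuation). I would resolve this by establishing absolute convergence of the resulting $s$-integral directly: on the line $\Re(s)=1/2$ Stirling's formula gives $|\Gamma(\tfrac12+it)|\asymp e^{-\pi|t|/2}$, while $|CS(\sgn(y)\pi(\tfrac12+it)/2)|\ll e^{\pi|t|/2}$, so the product $\Gamma(s)\,CS(\sgn(y)\pi s/2)$ grows at most polynomially in $|t|$. Combined with the super-polynomial decay of $f_M(1-s)$ along the contour, the integrand is rapidly decreasing, so the integral converges absolutely. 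The interchange can then be legitimised by a standard truncation or dominated-convergence argument --- for instance, inserting a convergence factor $e^{-\delta x}$ into the $x$-integral, for which all manipulations are justified by absolute convergence, and then letting $\delta\to0^+$ while controlling the limit through the decay estimates just described. Once the interchange is in place, the identity follows immediately upon reading off $k_M(s)$ from the second step.
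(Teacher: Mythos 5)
Your proof is correct. Note that the paper does not actually prove this lemma itself---it is quoted from \cite[Sec. 3.3]{S&Y}---and your argument (the classical Mellin evaluations $\int_0^\infty x^{s-1}\cos x\,dx=\Gamma(s)\cos(\pi s/2)$, $\int_0^\infty x^{s-1}\sin x\,dx=\Gamma(s)\sin(\pi s/2)$ on $\Re(s)=1/2$, combined with Mellin--Parseval and an $e^{-\delta x}$ regularization to justify the interchange, using Stirling to see that $\Gamma(s)CS(\sgn(y)\pi s/2)$ is polynomially bounded on the critical line while $f_M(1-s)$ decays rapidly) is essentially the standard derivation given in that reference.
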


   We apply Lemma \ref{CS} to see that
\begin{align}\label{innerint1}
    &\int_{0}^{\infty}(C+S)\Big(\frac{\pi kxX}{a^2n_1n_2}\Big)h(xX,n_1,n_2)\,dx =\frac{1}{(2\pi i)X}\int_{(\varepsilon)}h_M(1-s;n_1,n_2)\Big(\frac{a^2n_1n_2}{\pi|k|}\Big)^s\Gamma(s)(C+\sgn(k)S)\Big(\frac{\pi s}{2}\Big)\,ds.
\end{align}
Taking the Mellin transforms in the variables $n_1$ and $n_2$, the right-hand side of (\ref{innerint1}) equals
\begin{align}
\label{innerint2}
    \frac{1}{X}\Big(\frac{1}{2\pi i}\Big)^3\int_{(\varepsilon)}\int_{(\varepsilon)}\int_{(\varepsilon)} \widetilde{h}_M(1-s,u,v)\frac{1}{n_1^un_2^v}\Big(\frac{a^2n_1n_2}{\pi|k|}\Big)^s\Gamma(s)(C+\sgn(k)S)\Big(\frac{\pi s}{2}\Big)\,ds\,du\,dv,
\end{align}
where
\begin{align*}
    \widetilde{h}_M(s,u,v)=\int_{0}^{\infty}\int_{0}^{\infty}\int_{0}^{\infty}h(x,y,z)x^{s}y^{u}z^{v}\,\frac{dx}{x}\,\frac{dy}{y}\,\frac{dz}{z}.
\end{align*}
Integrating by parts and \eqref{dercond} implies that for $\Re(s)$, $\Re(u)$, $\Re(v)>0$ and any integers $B_j\geq 0$, $1\leq j\leq 3$, we have
\begin{equation}\label{widetildehM}
    \widetilde{h}_M(s,u,v)\ll \frac{X^{\Re(s)}Y^{\Re(u)+\Re(v)}}{|uvs|(1+|s|)^{B_1}(1+|u|)^{B_2}(1+|v|)^{B_3}}.
\end{equation}

   We deduce from \eqref{S1(h)}, \eqref{innerint1} and \eqref{innerint2} that
\begin{align}
\label{Skneq01}
\begin{split}
    S_{k\neq0}&=\frac{1}{2}\sum_{\substack{a\leq Z\\(a,2)=1}}\frac{\mu(a)}{a^2}\sum_{k\neq 0}(-1)^{k}\sum_{\substack{(n_1,2a)=1\\(n_2,2a)=1}}\frac{\lambda_f(n_1)\lambda_f(n_2)}{n_1n_2}G_k(n_1n_2)\Big(\frac{1}{2\pi i}\Big)^3\\
    &\quad\times\int_{(\varepsilon)}\int_{(\varepsilon)}\int_{(\varepsilon)}\widetilde{h}_M(1-s,u,v)\frac{1}{n_1^{u}n_2^{v}}\Big(\frac{a^2n_1n_2}{\pi|k|}\Big)^s\Gamma(s)(C+\sgn(k)S)\Big(\frac{\pi s}{2}\Big)\,ds\,du\,dv.
\end{split}
\end{align}
Note that by \eqref{widetildehM} and the estimation (see \cite[p. 1107]{S&Y}),
\begin{equation}\label{Gammabound}
 \Big| \Gamma(s) (C \pm S)\Big( \frac {\pi s}{2} \Big) \Big| \ll |s|^{\Re(s)-1/2},
\end{equation}
 the integral over $s$ in \eqref{Skneq01} may be taken over any vertical lines between $0$ and $1$ and
the integrals over $u, v$ in \eqref{Skneq01} may be taken over any vertical lines between $0$ and $2$.

Observe that $G_k(m)=G_{4k}(m)$ for odd $m$ and we write $4k = k_1k^2_2$, where $k_1$ is a fundamental discriminant, and $k_2>0$.  Then the expression in  \eqref{Skneq01} can be written as
\begin{align*}
    S_{k\neq 0} =&  \frac{1}{2} \sum_{\substack{a \leq Z \\ (a,2)=1}} \frac{\mu(a)}{a^2} \sumflat_{k_1  } \Big(\frac{1}{2\pi i}\Big)^3\int_{(1+2\varepsilon)} \int_{(1+2\varepsilon)} \int_{(1/2+\varepsilon)}Z(u-s, v-s, s;a,  k_1) \\
&\qquad\qquad \times   \widetilde{h}\left(1-s,u,v \right) \Gamma(s) (C + \sgn(k_1) S)\left(\frac{\pi s}{2} \right)\Big(\frac{a^2}{\pi |k_1|  }\Big)^{s}d s \, d u \, d v,
\end{align*}
 where the function $Z$ is defined in \eqref{eq:Z}.

 We make a change of variables to rewrite $S_{k\neq 0}$ as
\begin{align*}
    S_{k\neq 0}=& \frac{1}{2} \sum_{\substack{a \leq Z \\ (a,2)=1}} \frac{\mu(a)}{a^2} \sumflat_{k_1   }
\left(\frac{1}{2\pi i}\right)^3  \int_{(1/2+\varepsilon)} \int_{(1/2+\varepsilon)}
\int_{(1/2+\varepsilon)} Z(u, v, s;a,  k_1)
\\
\quad &\times   {\widetilde h}(1-s,u+s,v+s)\Gamma(s) (C + \sgn(k_1) S) \left( \frac{\pi s}{2}\right)
 \left(\frac{a^2}{\pi |k_1|}\right)^s
d s \, du \, dv.
\end{align*}
We split the sum over $k_1$ into two terms according to whether $|k_1| \leq K$ or not, with $K$ to be optimized later.  If $|k_1| \leq K$, we move the lines of integration to $\Re(s)= c_1$ for some $1/2<c_1<1$, $\Re(u)=\Re(v)=\varepsilon$. Otherwise, we move
the lines of integration to $\Re(s)=c_2$ for some $c_2>1$, $\Re(u)=\Re(v)=\varepsilon$.
We encounter no poles in either case. Applying Lemma \ref{lemma:Z} and the bound in \eqref{Lbound} yields
\begin{align*}
Z(u, v,s;a,  k_1)
&\ll |L_a( 1/2+u, f \otimes\chi_{ k_1}) L_a(1/2+v,f\otimes \chi_{ k_1})| \ll d^2 (a) \left(|L(1/2+u, f \otimes\chi_{ k_1})|^2 +
|L(1/2+v, f \otimes\chi_{ k_1})|^2\right).
\end{align*}
The above and \eqref{widetildehM} with $B_1=B_2=B_3=5$, together with \eqref{Gammabound} and the symmetry in $u$ and $v$ give that the terms with $|k_1| \leq K$ contribute
\begin{align} \label{k1<K1}
\ll X^{1-c_1} & Y^{2c_1+2\varepsilon}  \sum_{a\leq Z} \frac{d^2(a)}{a^{2-2c_1}}\int_{(\varepsilon)}\int_{(\varepsilon)} \int_{(c_1)}\sumflat_{|k_1|\leq K}\frac{1}{|k_1|^{c_1}}   |L( 1/2+u, f\otimes\chi_{ k_1})|^2 \nonumber\\
&\qquad\qquad\qquad\times  \frac{ |s|^{\Re(s)-1/2}\, ds \,du \, d v }{|1-s||u+s||v+s|(1+|1-s|)^5(1+|u+s|)^5(1+|v+s|)^5}.
\end{align}
  We further apply Lemma \ref{MLL2} and partial summation to get
\begin{align*}
 \sumflat_{ |k_1| \leq K}\frac{1}{|k_1|^{c_1}} |L(1/2+u, f\otimes\chi_{ k_1})|^{2} &\ll K^{1-c_1+\varepsilon}(1+|t|)^{1+\varepsilon} \ll K^{1-c_1+\varepsilon}\left ((1+|u+s|)^{1+\varepsilon}+|s|^{1+\varepsilon} \right ).
\end{align*}
Applying the above in \eqref{k1<K1}, we infer that the terms with $|k_1| \leq K$ contribute
\begin{align*}
%%\label{eq:firstbd1}
\ll X^{1-c_1} Y^{2c_1+2\varepsilon} K^{1-c_1+\varepsilon}Z^{2c_1-1+\varepsilon}.
\end{align*}

Similarly, the contribution from the complementary terms with $|k_1| >K$  is
\begin{align*}
\ll X^{1-c_2} Y^{2c_2+2\varepsilon} K^{1-c_2+\varepsilon}Z^{2c_2-1+\varepsilon}.
\end{align*}
  We now balance these contributions by setting $K=Y^2Z^2/X$ so that
\begin{align*}
 X^{1-c_1} Y^{2c_1} K^{1-c_1}Z^{2c_1-1}= X^{1-c_2} Y^{2c_2} K^{1-c_2}Z^{2c_2-1}.
\end{align*}
Now taking $c_1=1/2+\varepsilon$, we conclude
\begin{align}\label{Skneq02}
S_{k\neq 0} \ll (XYZ)^{\varepsilon}Y^2Z.
\end{align}
  We now deduce from \eqref{S(h)},  Lemma \ref{S2h}, \eqref{Sk=03} and \eqref{Skneq02} that
\begin{align*}
 S(h) = C_0(\Phi, \Psi)XY + O\left( (XY)^{1 + \varepsilon} Z^{-1+ \varepsilon}+(XYZ)^{\varepsilon}Y^2Z+XY^{1/2+\varepsilon}   \right).
\end{align*}
  Now the expression in \eqref{S} follows upon setting $Z=\sqrt{X/Y}$ in the above expression. This completes the proof of Theorem \ref{meansquare}.

\vspace*{.5cm}

\noindent{\bf Acknowledgments.}  P. G. is supported in part by NSFC grant 12471003.

\bibliography{biblio}
\bibliographystyle{amsxport}

\end{document}